\let\NAT@parse\undefined
\def\@linkcolor{blue}
  \def\@anchorcolor{red}
  \def\@citecolor{red}
  \def\@filecolor{red}
  \def\@urlcolor{red}
  \def\@menucolor{red}
  \def\@pagecolor{red}
  \edef\x{%
    \edef\noexpand\x{%
      \endgroup
      \noexpand\toks@{%
        \catcode 96=\noexpand\the\catcode`\noexpand\`\relax
        \catcode 61=\noexpand\the\catcode`\noexpand\=\relax
      }%
    }%
    \noexpand\x
  }%
\newtheorem{Theorem}{Theorem}
\newtheorem{Lemma}{Lemma}
\newtheorem{Problem}{Problem}
\newtheorem*{Problem*}{Problem}
\newtheorem{Remark}{Remark}
\newtheorem{Assumption}{Assumption}
\newtheorem{Definition}{Definition}
\def\BibTeX{{\rm B\kern-.05em{\sc i\kern-.025em b}\kern-.08em
    T\kern-.1667em\lower.7ex\hbox{E}\kern-.125emX}}
\begin{document}

\title{\LARGE{\bf Robust Control Barrier and Control Lyapunov Functions with Fixed-Time Convergence Guarantees}}

\author{Kunal~Garg \and
Dimitra~Panagou
\thanks{
The authors would like to acknowledge the support of the Air Force Office of Scientific Research under award number FA9550-17-1-0284.}
\thanks{The authors are with the Department of Aerospace Engineering, University of Michigan, Ann Arbor, MI, USA; \texttt{\{kgarg, dpanagou\}@umich.edu}.}
}
\maketitle
\begin{abstract}
This paper studies control synthesis for a general class of nonlinear, control-affine dynamical systems under additive disturbances and state-estimation errors. We enforce forward invariance of static and dynamic safe sets and convergence to a given goal set within a user-defined time in the presence of input constraints. We use robust variants of control barrier functions (CBF) and fixed-time control Lyapunov functions (FxT-CLF) to incorporate a class of additive disturbances in the system dynamics, and state-estimation errors. To solve the underlying constrained control problem, we formulate a quadratic program and use the proposed robust CBF-FxT-CLF conditions to compute the control input. We showcase the efficacy of the proposed method on a numerical case study involving multiple underactuated marine vehicles.
\end{abstract}


\section{Introduction}
With the advent of complex missions that require multi-robot systems to execute various tasks in parallel, the need for a systematic synthesis of algorithms that enable the underlying objectives has emerged. 
Standard objectives in such missions include, but are not limited to, requiring each robot to stay within a given subset of the state space for a given time duration, while keeping a point of interest in its field of view, and reaching a destination within a given time horizon. It is also important that each robot always maintains a safe distance from stationary and moving objects or other robots in the environment. In problems where the objective is to stabilize the closed-loop trajectories to a given desired point or a set, control Lyapunov functions (CLFs) are very commonly used to design the control input \cite{romdlony2016stabilization,ames2017control}. Temporal constraints, i.e., constraints pertaining to convergence within a {fixed} time, appear in time-critical applications, for instance when a task must be completed within a given time interval. The use of fixed-time stability (FxTS) \cite{polyakov2012nonlinear} has enabled the synthesis of controllers guaranteeing finite- or fixed-time reachability to the desired point or a set \cite{garg2019control}.  Similarly, safety or containment of the closed-loop trajectories in a subset of the state-space can be enforced using control barrier functions (CBFs). Traditionally, CBFs have been used to encode safety with respect to static safe sets arising due to the presence of stationary obstacles or unsafe regions in the state-space (see \cite{romdlony2016stabilization,wieland2007constructive}) and with respect to dynamically-changing safe sets, such as in multi-agent systems ( \cite{li2018formally,lindemann2019control}).

The development of fast optimization solvers has enabled the online control synthesis using quadratic programs (QPs), where CLF and CBF conditions are encoded as linear constraints, while the objective is to minimize the norm of the control input \cite{ames2017control,glotfelter2017nonsmooth} or the deviation of the control input from a \textit{nominal} controller \cite{li2018formally,wang2017safety}. Most of the prior work on QP-based control design enforces the safety constraint with one fix CBF condition and uses a slack term in the CLF condition to guarantee that the QP is feasible in the absence of input constraints. However, control input constraints should be also considered in the design step, otherwise, the derived control input might not be realizable due to actuator limits, and might lead to violation of the safety requirements. In the prior work \cite{garg2019prescribedTAC}, we considered an ideal case without any disturbances, and proposed a QP with feasibility guarantees that achieve forward invariance of a safe set and reachability to a goal set, even in the presence of control input constraints. 

Encoding safety in the presence of disturbances can be done using robust CBFs \cite{jankovic2018robust,cortez2019control}. While the aforementioned work considers bounded additive disturbance in the system dynamics, it is generally assumed that the system states are available without any errors. In their majority, earlier work in the literature on multi-agent collision avoidance using CBFs \cite{chen2015decoupled,rodriguez2014trajectory, glotfelter2017nonsmooth} assumes perfect knowledge of the states of the agents and no state-estimation errors. In this paper, apart from additive disturbances in the system dynamics, we also consider bounded state-estimations errors and incorporate them in the robust CBF design to guarantee forward invariance of the safe sets.

This paper studies QP-based control synthesis for multi-task problems involving agents of nonlinear, control-affine dynamics, with the following objectives for the closed-loop trajectories: (i) remain inside a static safe set, (ii) remain inside a time-varying safe set (arising for instance due to the presence of moving obstacles or neighboring agents), and (iii) reach a given goal set within a user-defined time. We first present robust CBF conditions to guarantee forward invariance while incorporating both the disturbance in the system dynamics as well as the state-estimation error. Then, utlizing the fixed-time stability conditions from \cite{garg2021FxTSDomain}, we propose a robust fixed-time CLF condition to guarantee convergence to the desired goal set within the user-defined time, extending the prior results in \cite{li2018formally,garg2019control,garg2019prescribedTAC}. Finally, we merge the presented robust CBF-FxT-CLF conditions in a QP formulation, show its feasibility, and discuss the conditions under which the control input defined as the solution of the QP solves the multi-task problem. We showcase the efficacy of the proposed method via a multi-agent case study involving under-actuated marine vehicles.
 
\section{Mathematical Preliminaries}\label{sec: math prelim}
\noindent \textbf{Notations}: In the rest of the paper, $\mathbb R$ denotes the set of real numbers and $\mathbb R_+$ denotes the set of non-negative real numbers. We use $\|x\|$ to denote the Euclidean norm of a vector $x\in \mathbb R^n$. $|x|$ denotes the absolute value when $x\in \mathbb R$, and cardinality, or the number of elements, when $x\in 2^N$ is a set, for some positive integer $N$. We use $\partial S$ to denote the boundary of a closed set $S\subset \mathbb R^n$ and $\textrm{int}(S)$ to denote its interior, and $\|x\|_S = \inf_{y\in S}\|x-y\|$, to denote the distance of $x\in \mathbb R^n$ from the set $S$. We use $\mathbb B_\epsilon$ to denote a ball of radius $\epsilon>0$ centered at the origin. 

\vspace{2pt}
\noindent \textbf{System model}: In this work, we consider a multi-task problem for the dynamical system given as:
\begin{align}\label{eq: NL pert cont affine}
    \dot x(t) & = f(x(t)) + g(x(t))u + d(t,x),
\end{align}
where $x\in \mathbb R^n, u \in \mathcal U\subset \mathbb R^m$ are the state and the control input vectors, respectively, with $\mathcal U$ the control input constraint set, $f:\mathbb R^n\rightarrow\mathbb R^n$ and $g: \mathbb R^n\rightarrow\mathbb R^{n\times m}$ are continuous functions and $d:\mathbb R_+\times \mathbb R^n\rightarrow \mathbb R^n$ is an unknown additive disturbance. We make the following assumption. 

\begin{Assumption}\label{assum d}
There exists $\gamma>0$ such that for all $t\geq 0$ and $x\in \mathcal D\subset \mathbb R^n$, $ \|d(t,x)\|\leq \gamma$.
\end{Assumption}

\noindent Assumption \ref{assum d} implies that the disturbance $d$ is uniformly bounded in the domain $\mathcal D$. This is a standard model to account for various types of uncertainties, environmental noises, and external disturbances (see, e.g., \cite{cortez2019control}). Furthermore, we assume that the state $x$ is not perfectly known, to account for sensor noises and uncertainties. More specifically, we consider that only an estimate of the system state, denoted as $\hat x$, is available, that satisfies:
\begin{align}\label{eq: NL pert hat x}
    \dot{\hat x} = f(\hat x) + g(\hat x)u ,+ d(t, \hat x),
\end{align}
and make the following assumption.  

\begin{Assumption}\label{assum hat x eps}
There exists an $\epsilon>0$ such that $\|\hat x(t)-x(t)\|\leq \epsilon$, for all $t\geq 0$. 
\end{Assumption}

We now define some notations and functions necessary to state the main problem. Let $h_{S}:\mathbb R^n\rightarrow\mathbb R$ be a continuously differentiable function defining the \textit{static} safe set $S_{S} = \{x\; |\; h_{S}(x)\leq 0\}$. The system trajectories might also need to maintain safety with respect to a dynamically-changing safe set, for instance due to the presence of moving obstacles or other agents in a multi-agent scenario. In such a case, a centralized collision avoidance scheme would require each agent $i$ to be in a safe set defined as $\{x_i(t)\; |\; h(x_i(t),x_j(t)) \leq 0\}$ for all $j\neq i$, where $x_i, x_j\in \mathbb R^n$ are the states of agent $i$ and $j$ (see Section \ref{sec: simulations} for a multi-agent case study). In particular, if $x_i$ represents the position of the agent $i$, then the function $h$ can be chosen as $h(x_i(t),x_j(t)) = d_s^2-\|x_i(t)-x_j(t)\|^2$, where $d_s>0$ is the safety distance. In this case, we can define $h_T(t,x_i) = \max_{j\neq i}h(x_i(t),x_j(t))$ so that it encodes safety with respect to all other agents.\footnote{One can use a smooth approximation for the $\max$ function, e.g., $h_T = \log(\sum_j e^{h_{ij}})$, so that the resulting function $h_T$ is continuously differentiable (see \cite{lindemann2019control}).} To encode safety with respect to a general time-varying safe sets, such as the one discussed above, let $h_{T} :\mathbb R_+\times\mathbb R^n\rightarrow \mathbb R$ be a continuously differentiable function defining the \textit{time-varying} safe set $S_{T}(t) = \{x\; |\; h_{T}(t,x)\leq 0\}$. Finally, let $h_{G}:\mathbb R^n\rightarrow\mathbb R$ a continuously differentiable function defining the goal set $S_{G} = \{x\; |\; h_{G}(x)\leq 0\}$. 
The problem formulation follows.

\begin{Problem}\label{Problem: multiagent ST}
Find a control input $u(t)\in \mathcal U =\{v\in \mathbb R^m \; |\; u_{j,min}\leq v\leq u_{j,max}, j = 1, 2, \ldots, m\}$, $t\geq 0$, and a set $D$, such that for all $x(0)\in D\subset S_{S}\cap S_{T}(0)$, the closed-loop trajectories of \eqref{eq: NL pert cont affine} satisfy
\begin{itemize}
    \item[(i)] 
    $x(\bar T)\in S_{G}$ for some user-defined $\bar T>0$;
    \item[(ii)] 
    $x(t)\in S_S$ for all $t\geq 0$;
    \item[(iii)]
    $x(t)\in S_{T}$ for all $t\geq 0$.
\end{itemize}
\end{Problem}

\noindent Here, $\mathcal U$ is box-constraint set where $u_{j,min}< u_{j,max}$ are the lower and upper bounds on the individual control input $v_j$ for $j = 1, 2, \ldots, m$, respectively. Input constraints of this form are very commonly considered in the literature \cite{ames2017control,cortez2019control}. We can write $\mathcal U$ in a compact form as $\mathcal U = \{v\; |\; A_{u}v\leq b_{u}\}$ where $A_u\in \mathbb R^{2m\times m}, b_u\in \mathbb R^{2m}$.



\vspace{2pt}
\noindent \textbf{Forward invariance}: We first review a sufficient condition for guaranteeing forward invariance of a set in the absence of the disturbances and noises. Define $S(t) = \{x\; |\; h(t,x)\leq 0\}$ for some continuously differentiable $h:\mathbb R_+\times\mathbb R^n\rightarrow\mathbb R$.

\begin{Lemma}\label{lemma: nec suff safety}
Let $d \equiv 0$ and the solution $x(t)$ of \eqref{eq: NL pert cont affine} exist and be unique in forward time. Then, the set $S(t) = \{x\; |\; h(t,x)\leq 0\}$ is forward invariant for the trajectories of \eqref{eq: NL pert cont affine} for all $x(0)\in S(0)$ if the following condition holds:
\begin{equation}\label{eq: safety cond}
    \inf_{u\in \mathcal U}\left\{L_{f}h(t,x)+L_{g}h(t,x)u+\frac{\partial h}{\partial t}(t,x)\right\}\leq \alpha(-h(t,x)),
\end{equation}
for all $x\in S(t)$, $t\geq 0$ where $\alpha$ is a locally Lipschitz class-$\mathcal K$. Furthermore, if $h(0,x(0))<0$, then for any $T\geq 0$, $h(t,x(t))<0$ for all $0\leq t\leq T$. 
\end{Lemma}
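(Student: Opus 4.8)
The plan is to convert the set-invariance claim into a scalar differential inequality along closed-loop trajectories and then apply the comparison lemma. First I would pick a measurable control $u(\cdot)$ with values in $\mathcal U$ that, pointwise along the trajectory, attains (or approximates to within any fixed tolerance) the infimum in \eqref{eq: safety cond}; this is possible because $\mathcal U$ is compact and the bracketed term is affine, hence continuous, in $u$, while $L_f h$, $L_g h$ and $\tfrac{\partial h}{\partial t}$ are continuous. Let $x(t)$ be the corresponding solution of \eqref{eq: NL pert cont affine} with $d\equiv 0$ (unique, by hypothesis), and set $\lambda(t):=h(t,x(t))$. Since $h\in C^1$ and $x(\cdot)$ is locally absolutely continuous, $\lambda$ is locally absolutely continuous and
\begin{equation*}
\dot\lambda(t)=L_f h(t,x(t))+L_g h(t,x(t))u(t)+\tfrac{\partial h}{\partial t}(t,x(t))
\end{equation*}
for almost every $t$; whenever $x(t)\in S(t)$ (equivalently $\lambda(t)\le 0$), the choice of $u$ and \eqref{eq: safety cond} give $\dot\lambda(t)\le\alpha(-\lambda(t))$.

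Second I would analyze the scalar comparison system $\dot y=\alpha(-y)$. Since $\alpha$ is class-$\mathcal K$ we have $\alpha(0)=0$, so $y\equiv 0$ is an equilibrium, and since $\alpha$ is locally Lipschitz, uniqueness of solutions implies that the solution $\mu(\cdot)$ with $\mu(0)=\lambda(0)\le 0$ cannot cross $y=0$; hence $\mu(t)\le 0$ for all $t\ge 0$, with $\mu(t)<0$ for all $t$ whenever $\mu(0)<0$. I would then argue by contradiction: suppose the set $\{t>0:\lambda(t)>0\}$ is nonempty and let $t_1$ be its infimum; by continuity $\lambda(t_1)=0$ and $\lambda\le 0$ on $[0,t_1]$, so the inequality $\dot\lambda\le\alpha(-\lambda)$ holds a.e. on $[0,t_1]$ and the comparison lemma yields $\lambda(t)\le\mu(t)\le 0$ there. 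When $\lambda(0)<0$ this forces $\lambda(t_1)\le\mu(t_1)<0$, contradicting $\lambda(t_1)=0$, so no such $t_1$ exists and $\lambda(t)<0$ for all $t\ge 0$ --- exactly the last sentence of the statement.

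The step I expect to be the main obstacle is the borderline case $\lambda(0)=0$ (and, more generally, trajectories that graze $\partial S(t)$): there $\mu\equiv 0$ and the contradiction above degenerates, because \eqref{eq: safety cond} constrains $\dot\lambda$ only while $x(t)\in S(t)$ and so does not, by itself, rule out $\lambda$ becoming positive with vanishing derivative at a contact time. I would close this gap in one of two standard ways: (i) invoke Nagumo's theorem for time-varying sets, observing that on $\partial S(t)$ condition \eqref{eq: safety cond} with $\alpha(0)=0$ is precisely the sub-tangentiality requirement $L_f h+L_g h u+\tfrac{\partial h}{\partial t}\le 0$ guaranteeing (weak) forward invariance of $S(t)$; or (ii) obtain the case $x(0)\in S(0)$ from the strict case just proved, by approximating $x(0)$ with points in $\textrm{int}(S(0))$ and passing to the limit using continuous dependence of solutions on the initial condition, the estimate $\lambda(t)\le\mu(t)$ being preserved under this limit. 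The remaining verifications --- absolute continuity and the chain rule for $\lambda$, applicability of the comparison lemma with a locally Lipschitz right-hand side, and the sign/monotonicity properties of $\alpha$ --- are routine.
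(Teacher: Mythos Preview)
The paper does not actually prove this lemma: it is stated without proof and immediately followed by the remark that a function satisfying \eqref{eq: safety cond} is called a valid CBF in \cite{lindemann2019control} and a zeroing-CBF in \cite{ames2017control}, i.e., the result is imported from the literature rather than argued here. There is therefore no ``paper's own proof'' to compare against.

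That said, your approach is the standard one and is sound. Reducing to the scalar function $\lambda(t)=h(t,x(t))$, obtaining $\dot\lambda\le\alpha(-\lambda)$ while $\lambda\le 0$, and then invoking the comparison lemma against $\dot y=\alpha(-y)$ is exactly how results of this type are proved in the CBF literature (see, e.g., the zeroing-CBF arguments in \cite{ames2017control}). Your contradiction argument for the strict case $h(0,x(0))<0$ is clean and gives the final sentence of the statement directly. You are also right that the boundary case $\lambda(0)=0$ is the delicate point, and both of your proposed fixes are legitimate: option (i), Nagumo's theorem, is in fact what the paper itself leans on elsewhere (see Remark~\ref{remark: continuity QP solution}), so invoking it here would be fully consistent with the paper's framework. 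One small caveat worth making explicit: you need the selected control $u(\cdot)$ to be regular enough that the closed-loop system still has a unique forward solution (the lemma hypothesizes existence and uniqueness, but you are choosing a particular $u$); a continuous selection exists because the infimum of an affine function over the compact polytope $\mathcal U$ is attained at a vertex, and one can use, e.g., the argmin selection with a tie-breaking rule, or simply assume the stronger regularity the paper tacitly uses throughout.
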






\noindent A function that satisfies \eqref{eq: safety cond} is called a valid CBF by the authors in \cite{lindemann2019control}, and a zeroing-CBF by the authors in \cite{ames2017control}. 

\vspace{2pt}
\noindent \textbf{Fixed-time stability}: Next, we review a sufficient condition for fixed-time stability of the origin for the closed-loop trajectories of \eqref{eq: NL pert cont affine}. 

\begin{Lemma}[\hspace{-0.3pt}\cite{garg2021FxTSDomain}]\label{Th: FxTS new}
Let $V:\mathbb R^n\rightarrow \mathbb R$ be a continuously differentiable, positive definite, proper function, satisfying{\small
\begin{align}
    \dot V(x(t)) \leq -c_1(V(x(t)))^{a_1}-c_2(V(x(t)))^{a_2}+c_3V(x(t)),
\end{align}}\normalsize
with $c_1, c_2>0$, $c_3\in \mathbb R$, $a_1 = 1+\frac{1}{\mu}$, $a_2 = 1-\frac{1}{\mu}$ for some $\mu>1$, along the closed-loop trajectories of \eqref{eq: NL pert cont affine} under a continuous control input $u(x)$. Then, there exists a neighborhood $D$ of the origin such that for all $x(0)\in D $, the trajectories of \eqref{eq: NL pert cont affine} reach the origin in a fixed time $T$ where $T, D$ are known functions of $\mu$ and $\frac{c_3}{2\sqrt{c_1c_2}}$.
\end{Lemma}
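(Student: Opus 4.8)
The plan is to collapse the vector differential inequality to a scalar one, solve the resulting scalar ODE essentially in closed form by a power substitution, and then read off the finite settling time and the admissible domain from a case analysis on the discriminant of a quadratic. First I would put $W(t) := V(x(t))$; since $V$ is $C^1$, $W$ is differentiable along the closed-loop solution and $\dot W \le -c_1 W^{a_1} - c_2 W^{a_2} + c_3 W =: \phi(W)$ with $\phi$ continuous, so the comparison lemma yields $0 \le W(t) \le y(t)$ on the common interval of existence, where $y$ solves $\dot y = \phi(y)$, $y(0) = V(x(0))$. Thus it is enough to bound the time in which $y$ hits $0$; note that no Lipschitz condition on $\phi$ at $0$ is needed -- indeed $\phi(w) \approx -c_2 w^{a_2} < 0$ for small $w > 0$ (because $a_2 = 1 - \tfrac1\mu \in (0,1)$), which is precisely what produces finite-time arrival, and once $y = 0$ it stays there, forcing $V(x(t)) = 0$ and hence $x(t) = 0$ thereafter.

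Next I would exploit the special exponents. Writing $a_1 = 1 + \tfrac1\mu$, $a_2 = 1 - \tfrac1\mu$, factor $\phi(y) = -y^{a_2}\big(c_1 y^{2/\mu} - c_3 y^{1/\mu} + c_2\big)$ and substitute $z := y^{1/\mu}$. The powers cancel ($\tfrac1\mu - 1 + a_2 = 0$) and the dynamics become the autonomous scalar equation $\dot z = -\tfrac1\mu\, q(z)$, $q(z) := c_1 z^2 - c_3 z + c_2$. Separating variables, the arrival time of $z$ (equivalently, of $y$) at $0$ is $\int_0^{z(0)} \tfrac{\mu\,dz}{q(z)}$, so everything reduces to understanding where the quadratic $q$ is positive; its discriminant is $c_3^2 - 4 c_1 c_2 = 4 c_1 c_2\big(k^2 - 1\big)$ with $k := \tfrac{c_3}{2\sqrt{c_1 c_2}}$.

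The case analysis on $k$ is the heart of the argument. If $k < 1$, then $q > 0$ on all of $[0,\infty)$, $z$ strictly decreases to $0$, and the settling time is uniformly bounded by $T = \int_0^\infty \tfrac{\mu\,dz}{q(z)}$; completing the square and integrating the resulting $\arctan$ gives the explicit, initial-condition-independent value $T = \tfrac{\mu}{\sqrt{c_1 c_2}\,\sqrt{1-k^2}}\big(\tfrac\pi2 + \arctan\tfrac{k}{\sqrt{1-k^2}}\big)$, and -- using that $V$ is proper -- $D$ may be taken to be any sublevel set of $V$ on which the hypothesized inequality holds (global fixed-time convergence). If $k \ge 1$, then $q$ has a smallest positive root $z_- = \tfrac{c_3 - \sqrt{c_3^2 - 4 c_1 c_2}}{2 c_1}$ and is positive on $[0, z_-)$; I would fix any $\rho$ with $0 < \rho < z_-^{\mu}$ and set $D := \{x : V(x) < \rho\}$. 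Because $\dot z < 0$ on $(0, z_-)$, one has $V(x(t)) \le y(t) = z(t)^\mu \le z(0)^\mu = V(x(0)) < \rho$ for all $t$, so $D$ is forward invariant and the differential inequality stays valid; the same separation of variables then gives the finite bound $T = \int_0^{\rho^{1/\mu}} \tfrac{\mu\,dz}{q(z)}$.

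The main obstacle is not any one computation but the domain bookkeeping in the case $k \ge 1$: one must pick $D$ so that (i) the trajectory cannot leave it -- which requires the forward-invariance argument above, not merely the scalar bound on $y$ -- and (ii) $D$ stays strictly inside the level set $\{V = z_-^{\mu}\}$ on which $q$ vanishes, since otherwise the integral defining $T$ diverges (logarithmically, near $z_-$). Packaging the resulting pair $(T, D)$ as explicit functions of $\mu$ and $k$ (after factoring out the scale $\sqrt{c_1 c_2}$) is the only genuinely delicate bookkeeping; a secondary technical point, already flagged above, is that the comparison lemma must be invoked in a form that does not demand uniqueness of solutions, since $f$ and $g$ -- and hence $\phi$ at $0$ -- are merely continuous.
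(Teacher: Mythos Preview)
The paper does not prove this lemma; it is quoted from \cite{garg2021FxTSDomain} and invoked as a black box in the later arguments (Lemma~\ref{lemma: robust CLF} and Theorem~\ref{Thm ideal case main result}). There is therefore no in-paper proof to compare your proposal against.

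That said, your argument is correct and is the standard route to results of this type (and, in outline, the one taken in the cited source): reduce via the comparison lemma to the scalar majorant $\dot y = -c_1 y^{a_1} - c_2 y^{a_2} + c_3 y$, kill the exponents with the substitution $z = y^{1/\mu}$ to obtain $\dot z = -\tfrac{1}{\mu}\,q(z)$ with $q(z) = c_1 z^2 - c_3 z + c_2$, and read off $T$ and $D$ from the sign structure of $q$. Your handling of the $k\ge 1$ case --- in particular the forward-invariance bookkeeping needed to keep the trajectory inside the sublevel set $\{V<z_-^{\mu}\}$ --- is exactly the delicate point. One small caveat: your dichotomy ``$k<1$ versus $k\ge 1$'' tacitly assumes $c_3 \ge 0$. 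If $c_3 < -2\sqrt{c_1 c_2}$ then $k<-1$, the roots of $q$ are real but both negative, so $q>0$ on $[0,\infty)$ and convergence is still global, yet your $\arctan$ formula for $T$ no longer applies and must be replaced by a partial-fractions computation (or one may simply drop the term $c_3 V\le 0$ and quote the classical Polyakov bound). In the regime relevant to this paper, where $c_3$ models a destabilizing slack and is nonnegative, your case split and explicit formulas are exactly right.
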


\noindent In this work, without loss of generality, it is assumed that the functions $h_{S}, h_{T}, h_G$ are relative-degree one functions. For higher-relative degree functions, higher-order CLF and CBF conditions can be used. For example, if the function $h_{S}$ is of relative degree 2 (as in the case study presented in Section \ref{sec: simulations}), then following the results in \cite{nguyen2016exponential}, it can be shown that satisfaction of the inequality 
{{
\begin{align}\label{eq: h rel deg 2}
    \hspace{-5pt}L^{2}_fh_{S} +L_gL_fh_{S}u + 2L_fh_{S} + h_{S}\leq \alpha (-L_fh_{S}-h_{S}), 
\end{align}}}\normalsize
for some $\alpha\in \mathcal K$ implies that the set $\bar S_{S}  = \{x\; |\; L_fh_{S}(x)+h_{S}(x)\leq 0\}\subset S_{S}$ is forward-invariant. In this case, one can define $\bar h_{S} = L_fh_{S}+h_{S}$ so that \eqref{eq: h rel deg 2} reads $L_f\bar h_{S} + L_g\bar h_{S}u\leq \alpha(-\bar h_{S})$, which is same as \eqref{eq: safety cond}, thus guaranteeing forward invariance of the set $\bar S_{S}$. Interested reader is referred to \cite{xiao2019control} for more details on higher-order CBF conditions.

\section{Main results}
\noindent \textbf{Robust CBF and CLF}: First, we present conditions for robust CBFs so that the safety requirements (ii) and (iii) in Problem \ref{Problem: multiagent ST} can be satisfied in the presence of the disturbance $d$ and error $\epsilon$. We make the following assumption. 

\begin{Assumption}\label{assum hs ht L}
There exist $l_S, l_G, l_T>0$ such that $\left \|\frac{\partial h_{S}}{\partial x}(x)\right\| \leq l_S, \left \|\frac{\partial h_{G}}{\partial x}(x)\right\| \leq l_G \left \|\frac{\partial h_{T}}{\partial x}(t,x)\right\| \leq l_T$,
for all $x\in \mathcal D\subset \mathbb R^n$, and all $t\geq 0$.
\end{Assumption}

\noindent Since the functions $h_{S}, h_{T}$ are continuously differentiable, Assumption \ref{assum hs ht L} can be easily satisfied in any compact domain $\mathcal D$. Corresponding to the set $S(t) = \{x\; |\; h(t,x)\leq 0\}$ for some continuously differentiable $h:\mathbb R_+\times\mathbb R^n\rightarrow\mathbb R$, define $\hat S_\epsilon(t) = \{\hat x\; |\; h(t,\hat x)\leq -l\epsilon\}$, where $l = \sup \|\frac{\partial h(t,x)}{\partial x}\|$ is the Lipschitz constant of the function $h$. We define the notion of a robust CBF as follows.

\begin{Definition}[\textbf{Robust CBF-$S$}]
A continuously differentiable function $h:\mathbb R_+\times\mathbb R^n\rightarrow\mathbb R$ is called a robust CBF for the set $S(t) = \{x\; |\; h(t,x)\leq 0\}$ for \eqref{eq: NL pert cont affine} w.r.t. a disturbance $d$ satisfying Assumption \ref{assum d} if the following condition holds
\begin{align}\label{eq: robust safety cond}
\begin{split}
    &\inf_{u\in \mathcal U}\left\{L_{f}h(t,x(t))+L_{g}h(t,x(t))u+\frac{\partial h}{\partial t}(t,x(t))\right\}\\ 
    & \quad \quad \leq \alpha(-h(t, x(t)))-l\gamma,
\end{split}
\end{align}
for some locally Lipschitz class-$\mathcal K$ function $\alpha$ and for all $x(t)\in S(t)$, $t\geq 0$.
\end{Definition}

\noindent Note that we use the worst-case bound of $\|\frac{\partial h}{\partial x}d\| = l\gamma$ to define the robust CBF. This condition can be relaxed if more information than just the upper bound of the disturbance is known. We can now state the following lemma that relates the robust CBF condition with forward invariance of the set $S(t)$ in the presence of the disturbance $d$. For any function $\phi:\mathbb R_+\times \mathbb R^n\rightarrow\mathbb R$ with Lipschitz constant $l_\phi$, define 
\begin{align}\label{eq: hat func def}
\hat \phi(t,\cdot) = \phi(t,\cdot) + l_\phi\epsilon.
\end{align}

\begin{Lemma}\label{lemma: robust CBF}
Let the solution $x(t)$ of \eqref{eq: NL pert cont affine} exist and be unique in forward time, and $\hat h$ be a robust CBF-$S$ for \eqref{eq: NL pert hat x}. Then there exists a control input $u\in \mathcal U$ such that the set $S(t)$ is forward invariant for the trajectories of \eqref{eq: NL pert cont affine} for all $\hat x(0)\in \hat S_\epsilon(0)$ .
\end{Lemma}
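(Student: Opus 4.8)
The plan is to transfer forward invariance from the ``estimator set'' $\hat S_\epsilon(t)$ under the estimator dynamics \eqref{eq: NL pert hat x} to the true safe set $S(t)$ under \eqref{eq: NL pert cont affine}, using the Lipschitz bound on $h$ together with Assumption \ref{assum hat x eps}. First I would note that, since $\hat h = h + l\epsilon$ with $l$ the Lipschitz constant of $h$, the constant shift drops out of all derivatives, so $L_f\hat h = L_f h$, $L_g\hat h = L_g h$, $\frac{\partial \hat h}{\partial t} = \frac{\partial h}{\partial t}$, and $\hat h$ has the same Lipschitz constant $l$; moreover the zero sublevel set $\{\hat x \;|\; \hat h(t,\hat x)\le 0\}$ is exactly $\hat S_\epsilon(t)$. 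Hence the hypothesis $\hat x(0)\in \hat S_\epsilon(0)$ is precisely $\hat h(0,\hat x(0))\le 0$, and it remains to show (a) $\hat x(t)\in\hat S_\epsilon(t)$ for all $t$ along \eqref{eq: NL pert hat x}, and (b) that this forces $x(t)\in S(t)$.

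For step (a), I would pick a control $u(t)=u(\hat x(t))\in\mathcal U$ attaining the infimum in the robust CBF-$S$ condition \eqref{eq: robust safety cond} written for $\hat h$ along \eqref{eq: NL pert hat x}; such a minimizer exists because $\mathcal U$ is a compact box and the bracketed expression is affine, hence continuous, in $u$. Differentiating $\hat h$ along the closed-loop estimator trajectory gives
\[
\dot{\hat h}(t,\hat x(t)) = L_f\hat h(t,\hat x) + L_g\hat h(t,\hat x)u + \frac{\partial \hat h}{\partial t}(t,\hat x) + \frac{\partial \hat h}{\partial \hat x}(t,\hat x)\,d(t,\hat x).
\]
Bounding the disturbance term by $\big\|\frac{\partial \hat h}{\partial \hat x}\big\|\,\|d\| \le l\gamma$ via Assumptions \ref{assum d} and \ref{assum hs ht L}, and invoking \eqref{eq: robust safety cond}, the $-l\gamma$ slack exactly cancels the worst-case disturbance contribution, so $\dot{\hat h}(t,\hat x(t)) \le \alpha(-\hat h(t,\hat x(t)))$ holds whenever $\hat x(t)\in\hat S_\epsilon(t)$. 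This is the undisturbed invariance inequality of Lemma \ref{lemma: nec suff safety}, now legitimately applicable to the closed-loop estimator system (the disturbance has been absorbed); running its comparison/Nagumo-type argument yields $\hat h(t,\hat x(t))\le 0$, i.e. $h(t,\hat x(t))\le -l\epsilon$, for all $t\ge 0$.

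For step (b), the Lipschitz property of $h$ and Assumption \ref{assum hat x eps} give
\[
h(t,x(t)) \le h(t,\hat x(t)) + l\,\|x(t)-\hat x(t)\| \le -l\epsilon + l\epsilon = 0,
\]
so $x(t)\in S(t)$ for all $t\ge 0$, which is the claimed forward invariance. I expect the main obstacle to be the middle step: Lemma \ref{lemma: nec suff safety} is stated only for $d\equiv 0$, so one must re-run its comparison argument directly on the estimator dynamics after substituting the chosen $u$, being careful that (i) the selected $u\in\mathcal U$ yields a closed loop whose solution $\hat x(t)$ is well defined in forward time (this is where the standing existence-uniqueness hypothesis and the QP structure enter, since a minimizing $u(\hat x)$ must be at least measurable, and continuous under mild regularity), and (ii) the robust CBF inequality is only assumed on $\hat S_\epsilon(t)$, so the comparison argument must be localized to the boundary $\hat h = 0$. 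Everything else is a direct consequence of the Lipschitz estimate and Assumption \ref{assum hat x eps}.
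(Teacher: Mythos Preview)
Your proposal is correct and follows essentially the same approach as the paper: both arguments (i) use the Lipschitz bound on $h$ together with Assumption~\ref{assum hat x eps} to show $h(t,\hat x)\le -l\epsilon\Rightarrow h(t,x)\le 0$, (ii) differentiate $\hat h$ along \eqref{eq: NL pert hat x}, absorb the disturbance via $\|\frac{\partial h}{\partial x}d\|\le l\gamma$ and the $-l\gamma$ slack in \eqref{eq: robust safety cond} to obtain $\dot{\hat h}\le\alpha(-\hat h)$, and (iii) invoke Lemma~\ref{lemma: nec suff safety} to conclude forward invariance of $\hat S_\epsilon(t)$ and hence of $S(t)$. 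Your additional care about the existence of a minimizing $u$ and about re-running the comparison argument of Lemma~\ref{lemma: nec suff safety} in the presence of $d$ (the paper simply cites the lemma despite its $d\equiv 0$ hypothesis) is well placed, but does not constitute a different route.
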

\begin{proof}
Using the mean value theorem, we have that there exists $z\in \mathbb R^n$ such that
\begin{align*}
    h(t,x) & = h(t,\hat x + (x-\hat x)) = h(t,\hat x) + \frac{\partial h}{\partial x}(t,z)(x-\hat x)\\
    & \leq h(t, \hat x) +\left\|\frac{\partial h}{\partial x}(t,z)\right\|\|(x-\hat x)\|\leq  h(t, \hat x) +l\epsilon.
\end{align*}
Thus, $h(t, \hat x)\leq -l\epsilon$ implies that $h(t,x)\leq 0$. Note that the time derivative of $\hat h$ along the trajectories of \eqref{eq: NL pert hat x} reads
\begin{align*}
    \dot {\hat h}(t,\hat x) & = L_{f}h(t,\hat x)+L_{g}h(t,\hat x)u+L_{d}h(t,\hat x)+\frac{\partial h}{\partial t}(t,\hat x)\\
    & \leq L_{f}h(t,\hat x)+L_{g}h(t,\hat x)u+\left\|\frac{\partial h}{\partial x}d(t,\hat x)\right\|+\frac{\partial h}{\partial t}(t,\hat x)\\
    & \leq L_{f}h(t,\hat x)+L_{g}h(t,\hat x)u+\frac{\partial h}{\partial t}(t,\hat x) + l\gamma\\
    & \overset{\eqref{eq: robust safety cond}}{\leq} \alpha(-h(t,\hat x)-l\epsilon) = \alpha(-\hat h(t,\hat x)).
\end{align*}
Thus, using Lemma \ref{lemma: nec suff safety}, we have that $\hat h(t,\hat x(t))\leq 0$ (or, $h(t,\hat x(t))\leq -l\epsilon)$ for all $t\geq 0$, i.e., the set $\hat S_\epsilon(t)$ is forward invariant for $\hat x(t)$ for all $\hat x(0)\in \hat S_\epsilon(0)$. Thus, we have $h(t,x)\leq 0$ for all $t\geq 0$, implying forward invariance of set $S(t)$ for all $\hat x(0)\in \hat S_\epsilon(0)$. 
\end{proof}

Thus, we can use the condition \eqref{eq: robust safety cond} to satisfy the safety requirements (ii)-(iii) in Problem \ref{Problem: multiagent ST}. Intuitively, Lemma \ref{lemma: robust CBF} guarantees that if $\hat x(t)\in S_\epsilon(t)$, then $x(t)\in S(t)$ for any $t\geq 0$, starting from which, forward invariance of the set $S(t)$ can be guaranteed. 


\begin{Remark}
Note that for the robust CBF condition, if the set $\hat S_\epsilon(0)$ is empty, then there exists no initial condition for which forward invariance of the set $S$ can be guaranteed based on Lemma \ref{lemma: robust CBF}.
\end{Remark}

\noindent Next, we present a robust CLF condition to guarantee FxTS of the closed-loop trajectories to the goal set. Consider a continuously differentiable function $V:\mathbb R^n\rightarrow\mathbb R$ with Lipschitz constant $l_V$. Using the mean-value theorem, we obtain
\begin{align}\label{eq: V hat V rel}
    V(x) & \leq V(\hat x) + l_V\epsilon, \; \forall x, \hat x\in \mathbb R^n,
\end{align}
from which we obtain that if $V(\hat x) \leq -l_V\epsilon$, then $V(x)\leq 0$. Using this and inspired from \cite[Definition 2]{garg2019prescribedTAC}, we define the notion of robust fixed-time CLF (FxT-CLF). 

\begin{Definition}[\textbf{Robust FxT-CLF-$S_G$}]
A continuously differentiable function $V:\mathbb R^n\rightarrow \mathbb R$ is called a robust FxT-CLF-$S_G$ for a set $S_G$ for \eqref{eq: NL pert cont affine} w.r.t. disturbance $d$ satisfying Assumption \ref{assum d} if $V(x)<0$ for $x\in \textnormal{int}(S_G)$, and there exists $\alpha \in \mathcal K_\infty$ such that $V(x)\geq \alpha(\|x\|_{S_G})$ for all $x\notin S_G$, satisfying
\begin{align}
     \inf_{u\in \mathcal U}\{L_f V(x)+L_g V(x)u\} \leq & -\alpha_1(V(x))^{\gamma_1}-\alpha_2(V(x))^{\gamma_2}\nonumber\\
     & +\delta_1(V(x))-l_V\gamma,\label{eq: dot V new ineq}
\end{align}
for all $x\notin S_G$ with $\alpha_1, \alpha_2>0$, $\delta_1\in \mathbb R$, $\gamma_1 = 1+\frac{1}{\mu}$, $\gamma_2 = 1-\frac{1}{\mu}$ for some $\mu>1$. 
\end{Definition}

Based on this, we can state the following result.

\begin{Lemma}\label{lemma: robust CLF}
Let $V:\mathbb R^n\rightarrow \mathbb R$ be such that $\hat V$ is a robust FxT-CLF-$S_G$ for \eqref{eq: NL pert hat x}. Then, there exists $u\in \mathcal U$, and a neighborhood $D$ of the set $S_G$ such that for all $\hat x(0)\in D$, the closed-loop trajectories of \eqref{eq: NL pert cont affine} reach the goal set $S_{G}$ in a fixed time $T$.
\end{Lemma}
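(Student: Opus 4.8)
The plan is to mirror the structure of the proof of Lemma \ref{lemma: robust CBF}: first transfer the robust CLF condition on the estimate dynamics \eqref{eq: NL pert hat x} into a differential inequality for $\hat V$ along the estimate trajectory $\hat x(t)$, then invoke the fixed-time stability result Lemma \ref{Th: FxTS new} applied to the function $\hat V(\hat x) - \big(\text{offset for } S_G\big)$ on the estimate dynamics to conclude fixed-time convergence of $\hat x(t)$ into a shrunken goal set, and finally use the mean-value-theorem relation \eqref{eq: V hat V rel} to pull this back to a statement about the true trajectory $x(t)$ reaching $S_G$.

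First I would compute $\dot{\hat V}$ along \eqref{eq: NL pert hat x}, exactly as in Lemma \ref{lemma: robust CBF}: since $\hat V = V + l_V\epsilon$ has the same gradient as $V$, we get $\dot{\hat V}(\hat x) = L_f V(\hat x) + L_g V(\hat x)u + L_d V(\hat x) \le L_f V(\hat x) + L_g V(\hat x)u + l_V\gamma$, using Assumption \ref{assum d} and $\|\tfrac{\partial V}{\partial x}\| \le l_V$. Substituting the robust FxT-CLF inequality \eqref{eq: dot V new ineq} (which, since $\hat V$ is the robust FxT-CLF for \eqref{eq: NL pert hat x}, holds with $V$ replaced by $\hat V$ on the right-hand side and with the $-l_V\gamma$ term present) cancels the $+l_V\gamma$ and yields, for all $\hat x \notin S_G$,
\begin{align*}
\dot{\hat V}(\hat x(t)) \le -\alpha_1 (\hat V(\hat x(t)))^{\gamma_1} - \alpha_2 (\hat V(\hat x(t)))^{\gamma_2} + \delta_1 \hat V(\hat x(t)).
\end{align*}
Because $\hat V$ is positive definite and proper with respect to the closed set $S_G$ (from the class-$\mathcal K_\infty$ lower bound $\hat V(\hat x) \ge \alpha(\|\hat x\|_{S_G})$ outside $S_G$ and $\hat V < 0$ in the interior), Lemma \ref{Th: FxTS new} — applied with $\gamma_1 = 1 + 1/\mu$, $\gamma_2 = 1 - 1/\mu$, $c_1 = \alpha_1$, $c_2 = \alpha_2$, $c_3 = \delta_1$ — gives a neighborhood $D$ of $S_G$ and a fixed time $T$, depending only on $\mu$ and $\delta_1/(2\sqrt{\alpha_1\alpha_2})$, such that for all $\hat x(0) \in D$ the estimate trajectory satisfies $\hat V(\hat x(t)) \le 0$, equivalently $V(\hat x(t)) \le -l_V\epsilon$, for all $t \ge T$ (and in particular $\hat x(T)$ has entered the sublevel set where $\hat V \le 0$). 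Finally, by \eqref{eq: V hat V rel}, $V(\hat x(T)) \le -l_V\epsilon$ implies $V(x(T)) \le 0$, i.e. $x(T) \in S_G$, which is the claim.

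The main obstacle is a technical subtlety rather than a conceptual one: Lemma \ref{Th: FxTS new} is stated for stability of the origin with $V$ positive definite and proper, whereas here we need convergence to a set $S_G$, so I must argue that $\hat V$ (or a suitable composition, e.g. $\max\{\hat V, 0\}$ or a reparametrization that vanishes exactly on $\partial S_G$) plays the role of the Lyapunov function with $S_G$ as the "origin", and that the differential inequality only needs to hold outside $S_G$ — which is fine because once the trajectory reaches $S_G$ we are done, and the time estimate from Lemma \ref{Th: FxTS new} bounds the time to reach the zero sublevel set. A second point to handle carefully is the distinction between the estimate trajectory $\hat x$ satisfying \eqref{eq: NL pert hat x} and the true trajectory $x$ satisfying \eqref{eq: NL pert cont affine}: the FxTS argument is carried out entirely on $\hat x$, and only at the very end is the bound \eqref{eq: V hat V rel} used to transfer the conclusion to $x$; one should also note the neighborhood $D$ is a neighborhood of $S_G$ in $\hat x$-space, consistent with the statement.
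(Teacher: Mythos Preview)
Your proposal is correct and follows essentially the same route as the paper's proof: compute $\dot{\hat V}$ along \eqref{eq: NL pert hat x}, absorb the disturbance term $L_dV\le l_V\gamma$ using the $-l_V\gamma$ in the robust FxT-CLF inequality \eqref{eq: dot V new ineq} (applied to $\hat V$), invoke the fixed-time result of Lemma \ref{Th: FxTS new} / \cite[Theorem~1]{garg2021FxTSDomain} to get $\hat V(\hat x(T))\le 0$, and then use \eqref{eq: V hat V rel} together with $V(x)\ge\alpha(\|x\|_{S_G})$ to conclude $x(T)\in S_G$. The technical caveats you flag (set-valued versus origin-valued FxTS, and running the argument on $\hat x$ before transferring to $x$) are exactly the points the paper handles implicitly.
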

\begin{proof}
Define $\hat V(\hat x)$ per \eqref{eq: hat func def} so that $\dot{\hat V}(\hat x) = \dot V(\hat x)$. Note that \eqref{eq: dot V new ineq} implies that there exists $u\in \mathcal U$ such that
\begin{align*}
    \dot{\hat V}(\hat x) & = L_f{V}(\hat x)+L_g{V}(\hat x)u + L_d{V}(\hat x) \\
    & \leq -\alpha_1{\hat V}(\hat x)^{\gamma_1}-\alpha_2{\hat V}(\hat x)^{\gamma_2}+\delta_1{\hat V}(\hat x).
\end{align*}
Thus, from \cite[Theorem 1]{garg2021FxTSDomain}, we obtain that there exists a domain $D$ and fixed time $0<T<\infty$ (that are functions of $\frac{\delta_1}{2\sqrt{\alpha_1\alpha_2}}$) such that $\hat V(\hat x(T)) = 0$ for all $\hat x(0)\in D$. Thus, we obtain that $V(\hat x(T))\leq -l_V\epsilon$, which, in light of \eqref{eq: V hat V rel}, implies $V(x(T)) \leq 0$ for all $\hat x(0)\in D$. Since $V(x)\geq \alpha(\|x\|_{S_G})$, $V(x(T)) \leq 0$ implies that $x(T) \in S_G$, which completes the proof.
\end{proof}

\noindent The robust FxT-CLF condition guarantees that if the estimated state $\hat x$ reaches a certain level set in the interior of the set $S_G$, quantitatively given as $\{\hat x\; |\; V(\hat x)\leq -l_V\epsilon\}$, then the actual state $x$ reach the zero sub-level set of $V$, and thus, reach the set $S_G$.

\begin{Remark}
For Lemma \ref{lemma: robust CLF}, it is required that the set $\{\hat x\; |\; h_G(\hat x)\leq -l_G\epsilon\} \neq \emptyset$. Otherwise, if the minimum value of the function $h_G$ exceeds $-l_G\epsilon$, i.e., $h_{G,min} \triangleq \min\limits_{x\in S_G}h_G(x)>-l_G\epsilon$, so that $\{\hat x\; |\; h_G(\hat x)\leq -l_G\epsilon\} = \emptyset$, it is not possible for $\hat h_G(\hat x)$ to go to zero. In such cases, \eqref{eq: dot V new ineq} implies that the closed-loop trajectories only reach the set $\{x\; |\; h_G(x)\leq h_{G,min}+l_G\epsilon\}$, leading to input-to-state stability. In this work, we assume that $\{\hat x\; |\; h_G(\hat x)\leq -l_G\epsilon\} \neq \emptyset$.
\end{Remark}

With robust CBF and robust FxT-CLF conditions at hand, we can determine whether a given control input can render a safe set forward invariant, and drive the closed-loop trajectories to the desired goal set in the presence of disturbances and state-estimation error. Next, we address the problem of finding such a control input that satisfies the robust CBF and robust FxT-CLF conditions simultaneously, along with the input constraints. To this end, we resort to the QP-based method similar to \cite{garg2019prescribedTAC}, where the CBF and FxT-CLF conditions are cast as linear inequality constraints in a min-norm control problem. 

\vspace{2pt}
\noindent \textbf{QP formulation}: \normalsize
Now, we discuss how to incorporate robust FxT-CLF and CBF constraints in a QP formulation, and discuss its feasibility. We use the result of Lemma \ref{lemma: robust CBF} to formulate robust CBF constraints for the sets $S_{S}$ and $S_{T}$, and Lemma \ref{lemma: robust CLF} to formulate the robust FxT-CLF-$S_G$ constraint for the goal set $S_G$. For the sake of brevity, we omit the arguments $\hat x$ and $(t,\hat x)$. Define $z = \begin{bmatrix}v^T & \delta_{1} & \delta_{2}& \delta_{3}\end{bmatrix}^T\in \mathbb R^{m+3}$, and consider the following optimization problem{\small{
\begin{subequations}\label{QP gen ideal}
\begin{align}
\min_{z\in \mathbb R^{m+3}} \;&  \frac{1}{2}z^THz  + F^Tz\\
    \textrm{s.t.} \quad \quad  A_{u}v  \leq & \; b_{u}, \label{C1 cont const ideal}\\
    L_{f}\hat h_{G} + L_{g}\hat h_{G}v  \leq & \; \delta_{1}\hat h_{G}-\alpha_{1}\max\{0,\hat h_{G}\}^{\gamma_{1}} \nonumber\\
    & -\alpha_{2}\max\{0,\hat h_{G}\}^{\gamma_{2}}-l_{G}\gamma \label{C2 stab const ideal}\\
    L_{f}\hat h_S + L_{g}\hat h_Sv \leq &-\delta_{2}\hat h_S-l_S\gamma,\label{C3 safe const ideal}\\
    L_{f}\hat h_{T} + L_{g}\hat h_{T}v \leq &-\delta_{3}\hat h_{T}-\frac{\partial \hat h_{T}}{\partial t}-l_T\gamma,\label{C4 tv safe const ideal}
\end{align}
\end{subequations}}}\normalsize
where $H = \textrm{diag}\{\{w_{u_l}\}, w_1, w_2, w_3\}$ is a diagonal matrix consisting of positive weights $w_{u_l}, w_1, w_2,
w_3>0$ for $l = 1, 2, \dots, m$, $F = \begin{bmatrix}\mathbf 0_m^T & q & 0 & 0\end{bmatrix}^T$ with $q>0$ and functions $\hat h_G,\hat h_S$ (respectively, $\hat h_T$) are functions of $\hat x$ (respectively, $(t,\hat x)$) defined as per \eqref{eq: hat func def}. The parameters $\alpha_{1}, \alpha_{2}, \gamma_{1}, \gamma_{2}$ are fixed, and are chosen as $\alpha_{1} = \alpha_{2} = \mu\pi/(2\bar T)$, $\gamma_{1} = 1+\frac{1}{\mu}$ and $\gamma_{2} = 1-\frac{1}{\mu}$ with $\mu>1$ and $\bar T$ the user-defined time in Problem \ref{Problem: multiagent ST}. Define $\hat S_G = \{\hat x\; |\; h_G(\hat x)\leq -L_G\gamma\}$ so that $\hat x\in \hat S_G\implies x\in S_G$. 
We are now ready to present our main result. Let the solution of \eqref{QP gen ideal} be denoted as $z^*(\cdot) = \begin{bmatrix}v^*(\cdot)^T & \delta_{1}^*(\cdot) & \delta_{2}^*(\cdot)& 
\delta_{3}^*(\cdot)\end{bmatrix}^T$.

\begin{Theorem}\label{Thm ideal case main result}
The following holds for each agent $i$:
\begin{itemize}
    \item [(i)] The QP \eqref{QP gen ideal} is feasible for all $\hat x(t)\in \left(\textnormal{int}\left(\hat S_{S}\right)\cap \textnormal{int}\left(\hat S_{T}(t)\right)\right)\setminus \hat S_{G}$ for all $t\geq 0$;
    \item[(ii)] If the solution $z^*$ is continuous in its arguments and $\max\limits_{0\leq \tau\leq \bar T}\delta_{1}(x(\tau))\leq 0$, then the control input defined as $u = v^*$ guarantees convergence of the closed-loop trajectories to the goal set $S_{G}$ within time $\bar T$, i.e., the control input $u = v^*$ solves Problem \ref{Problem: multiagent ST} for all $\hat x(0)\in  \left(\mathcal D\cap\textnormal{int}(\hat S_{S})\cap \textnormal{int}(\hat S_{T}(0))\right)$.
\end{itemize}
\end{Theorem}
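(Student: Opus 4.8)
The plan is to prove the two parts separately, reducing part (ii) to Lemmas~\ref{lemma: robust CBF} and~\ref{lemma: robust CLF} once feasibility and the stated regularity of $z^*$ are available. For \textbf{part (i)}, I would exhibit an explicit feasible point: fix any $v\in\mathcal U$ (the box $\mathcal U$ is nonempty), and note that with $v$ and $\hat x$ fixed the three slacks $\delta_1,\delta_2,\delta_3$ appear, each in exactly one of \eqref{C2 stab const ideal}--\eqref{C4 tv safe const ideal}, linearly on the right-hand side with coefficients $\hat h_G$, $-\hat h_S$, $-\hat h_T$. Since $\hat x\in\mathrm{int}(\hat S_S)$ gives $\hat h_S<0$, $\hat x\in\mathrm{int}(\hat S_T(t))$ gives $\hat h_T<0$, and $\hat x\notin\hat S_G$ gives $\hat h_G>0$ (using that, by \eqref{eq: hat func def}, $\hat h_G\le 0\Leftrightarrow \hat x\in\hat S_G$, in which regime the $\max\{0,\hat h_G\}$ terms would vanish), every right-hand side tends to $+\infty$ as the corresponding slack grows; hence all four constraints can be satisfied simultaneously, the feasible set is nonempty, and since $H$ is positive definite the QP has a unique minimizer.

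For \textbf{part (ii)}, assume $z^*$ is continuous, so the closed loop $u=v^*(\hat x)$ is well posed and (by the invariance established below and compactness of $\mathcal D$) has solutions on $[0,\infty)$. \emph{Safety:} evaluating \eqref{C3 safe const ideal} along $\hat x(t)$ and bounding $L_d h_S$ by $l_S\gamma$ (Assumptions~\ref{assum d},~\ref{assum hs ht L}) gives $\dot{\hat h}_S\le-\delta_2^*\hat h_S$; continuity of $z^*$ on the compact $\mathcal D$ bounds $\delta_2^*$ above by some $\bar\delta$, so for $\hat h_S\le 0$ we get $-\delta_2^*\hat h_S\le\max\{\bar\delta,0\}(-\hat h_S)=:\alpha_S(-\hat h_S)$ with $\alpha_S$ a linear class-$\mathcal K$ function, which is precisely the robust CBF-$S$ inequality \eqref{eq: robust safety cond} for $\hat h_S$ w.r.t.\ \eqref{eq: NL pert hat x}; the same argument on \eqref{C4 tv safe const ideal} (retaining the $\frac{\partial\hat h_T}{\partial t}$ term) makes $\hat h_T$ a robust CBF-$S$. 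Lemma~\ref{lemma: robust CBF} then yields forward invariance of $S_S$ and $S_T(t)$ for the true trajectory, i.e.\ (ii)--(iii) of Problem~\ref{Problem: multiagent ST}, while the strict-inequality clause of Lemma~\ref{lemma: nec suff safety} keeps $\hat x(t)$ in $\mathrm{int}(\hat S_S)\cap\mathrm{int}(\hat S_T(t))$ and hence the QP feasible along the trajectory. \emph{Reachability:} while $\hat x(t)\notin\hat S_G$ we have $\hat h_G>0$, so \eqref{C2 stab const ideal} along $\hat x(t)$, after bounding $L_d h_G$ by $l_G\gamma$, reads $\dot{\hat h}_G\le-\alpha_1\hat h_G^{\gamma_1}-\alpha_2\hat h_G^{\gamma_2}+\delta_1^*\hat h_G$, i.e.\ the robust FxT-CLF-$S_G$ inequality \eqref{eq: dot V new ineq} with $V=h_G$; the hypothesis $\delta_1^*\le 0$ along the trajectory makes the last term nonpositive, so $\dot{\hat h}_G\le-\alpha_1\hat h_G^{\gamma_1}-\alpha_2\hat h_G^{\gamma_2}$, and integrating this scalar comparison estimate (cf.\ Lemma~\ref{Th: FxTS new}/\cite{garg2021FxTSDomain} with $c_3=0$) with $\alpha_1=\alpha_2=\mu\pi/(2\bar T)$ gives settling time at most $\mu\pi/(2\sqrt{\alpha_1\alpha_2})=\bar T$; thus $\hat x(t^\star)\in\hat S_G$ for some $t^\star\le\bar T$, and since at $\hat h_G=0$ the $\max$-truncated constraint \eqref{C2 stab const ideal} still gives $\dot{\hat h}_G\le\delta_1^*\hat h_G\le 0$, $\hat S_G$ is forward invariant for $\hat x$, so $\hat x(\bar T)\in\hat S_G$; finally $h_G(x)\le h_G(\hat x)+l_G\epsilon$ (mean value theorem with Assumptions~\ref{assum hat x eps},~\ref{assum hs ht L}) yields $x(\bar T)\in S_G$, i.e.\ (i) of Problem~\ref{Problem: multiagent ST}, so $u=v^*$ solves Problem~\ref{Problem: multiagent ST}.

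The \textbf{main obstacle} is not the feasibility count but the reduction in part (ii): one must show that the QP's implicitly-defined, sign-indefinite slacks furnish genuine hypotheses of Lemmas~\ref{lemma: robust CBF} and~\ref{lemma: robust CLF}. This requires boundedness of $\delta_2^*,\delta_3^*$ (from continuity of $z^*$ on the compact $\mathcal D$) so as to build a valid class-$\mathcal K$ comparison function, and it requires the sign condition $\delta_1^*\le 0$ together with the explicit choice $\alpha_1=\alpha_2=\mu\pi/(2\bar T)$ to pin the fixed time down to exactly the user-specified $\bar T$ rather than the generic finite time delivered by Lemma~\ref{lemma: robust CLF}. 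A secondary technical point is ensuring that the closed-loop trajectory remains in $\mathcal D$ so that Assumptions~\ref{assum d},~\ref{assum hs ht L} and QP feasibility persist along it, which the safe-set invariance (with an appropriately chosen $\mathcal D$) provides.
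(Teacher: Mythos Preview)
Your proposal is correct and follows essentially the same approach as the paper: exhibit a feasible point by fixing $v\in\mathcal U$ and solving for the slacks (which requires exactly the sign conditions $\hat h_G>0$, $\hat h_S<0$, $\hat h_T<0$ furnished by the hypothesis on $\hat x$), then reduce part~(ii) to Lemmas~\ref{lemma: robust CBF} and~\ref{lemma: robust CLF}. You are in fact more careful than the paper in bounding $\delta_2^*,\delta_3^*$ to manufacture a genuine class-$\mathcal K$ comparison, in extracting the exact settling time $\bar T$ from the choice $\alpha_1=\alpha_2=\mu\pi/(2\bar T)$ together with $\delta_1^*\le 0$, and in arguing that the interior of the safe sets (hence QP feasibility) is preserved along the trajectory; the paper's proof simply asserts these reductions.
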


\begin{proof}
Part (i): Since $\hat x(t)\in (\textnormal{int}(\hat S_{S}\cap \textnormal{int}(\hat S_{T}(t)))\setminus S_{G}$, we have that $\hat h_{S}(\hat x), \hat h_{T}(t,\hat x), \hat h_{G}(\hat x) \neq 0$ for all $t\geq 0$. Choose any $v = \bar v\in\mathcal U$ and define $\delta_{1} = \frac{L_{f}\hat h_{G} + L_{g}\hat h_{G}\bar v+\alpha_{1}\hat h_{G}^{\gamma_{1}}+\alpha_{2}\hat h_{G}^{\gamma_{2}}+l_V\gamma}{\hat h_{G}},$ which is well-defined for all $\hat x\notin \hat S_{G}$, so that \eqref{C2 stab const ideal} is satisfied with equality. Similarly, we can define $\bar \delta_{2}, \bar \delta_{3}$ so that \eqref{C3 safe const ideal}-\eqref{C4 tv safe const ideal} are satisfied with equality. Thus, there exists $\bar z = \begin{bmatrix}\bar v^T & \bar \delta_{1} & \bar \delta_{2}& \bar \delta_{3}\end{bmatrix}^T$ such that all the constraints of QP \eqref{QP gen ideal} are satisfied. 


Part (ii): The condition \eqref{C2 stab const ideal} implies that the function $\hat h_G$ is a robust FxT-CLF-$S_G$ for \eqref{eq: NL pert hat x}. Thus, using Lemma \ref{lemma: robust CLF} and \cite[Theorem 1]{garg2021FxTSDomain}, we obtain that $\hat h_G(\hat x(t)) \leq 0$ for $t\geq \bar T$, which implies that $h_G(\hat x(t))\leq -l_G\gamma$, which in turn implies $h_G(x(t))\leq 0$ for $t\geq \bar T$ for all $\hat x(0)\in \mathcal D\cap\textnormal{int}(\hat S_{S})\cap \textnormal{int}(\hat S_{T}(0))$. Furthermore, conditions \eqref{C3 safe const ideal} and \eqref{C4 tv safe const ideal} imply that the functions $\hat h_S$ and $\hat h_T$ are robust CBFs for \eqref{eq: NL pert hat x}, and thus, the set $S_S\cap S_T$ is forward-invariant for the closed-loop trajectories of \eqref{eq: NL pert cont affine}.
Thus, the control input $u = v^*$ solves Problem \ref{Problem: multiagent ST} for all $\hat x(0)\in D = \mathcal D\cap\textnormal{int}(\hat S_{S})\cap \textnormal{int}(\hat S_{T}(0))$.
\end{proof}

\noindent It is worth noting that the constraints in the QP \eqref{QP gen ideal} are a function of the estimated state $\hat x$, and not the actual state $x$, which is unknown. Thus, the resulting control input $u = v^*(\hat x)$ is realizable. Before presenting the case study, we provide some discussion on the main result. 

\begin{Remark}\label{remark: int set forw inv}
Theorem \ref{Thm ideal case main result} guarantees that starting from the intersection of the interiors of the safe sets, the closed-loop trajectories remain inside the interior of these sets. The case when the initial conditions lie on the intersection of the boundaries of the safe sets requires strong viability assumptions such as the existence of $u$ such that \eqref{eq: safety cond} holds for both $h_{S}$ and $h_{T}$ for all $x\in \partial S_{S}\cap \partial S_{T}$. 
\end{Remark}

\begin{Remark}\label{remark: continuity QP solution}
We impose continuity requirements on the solution of the QP \eqref{QP gen ideal} to use the traditional Nagumo's viability theorem to guarantee forward invariance of a set. Prior work e.g., \cite{ames2017control,glotfelter2017nonsmooth,garg2019prescribedTAC} discusses conditions under which the solution of parametric QP such as \eqref{QP gen ideal} is continuous, or even Lipschitz continuous. More recently, utilizing the concept of strong invariance and tools from non-smooth analysis, forward invariance of a set requiring that the control input is only measurable and locally bounded is discussed in \cite{usevitch2020strong}.  
\end{Remark}

\begin{Remark}
Note that the result in part (iii) of Theorem \ref{Thm ideal case main result} requires $\delta_{1}\leq 0$ so that the control input $u$ solves the convergence requirement of Problem \ref{Problem: multiagent ST}. When this condition does not hold, the closed-loop trajectories satisfy the safety requirements, but may not converge to the goal set from any arbitrary initial condition $x(0)\notin S_{G}$ (see \cite{garg2019prescribedTAC}). 
\end{Remark}

\section{Case Study}\label{sec: simulations}
We consider a numerical case-study involving underactuated underwater autonomous vehicles with state $X_i\in \mathbb R^6$, modeled as{\footnotesize
\begin{align}\label{eq: NL dyn example}
\hspace{-10pt}
 \begin{bmatrix}\dot x_i\\ \dot y_i\\ \dot \phi_i  \\ m_{11}\dot u_i \\  m_{22}\dot v_i \\m_{33}\dot r_i  \end{bmatrix} = \begin{bmatrix}u_i\cos\phi_i - v_i\sin \phi_i+ V_w\cos(\theta_w)\\ 
      u_i\sin\phi_i + v_i\cos \phi_i+ V_w\sin(\theta_w) \\
     r_i\\
   m_{22}v_ir_i + X_uu_i + X_{u|u|}|u_i|u_i\\
   -m_{11}u_ir_i + Y_vv_i + Y_{v|v|}|v_i|v_i\\
    (m_{11}-m_{22})u_iv_i + N_rr_i + N_{r|r|}|r_i|r_i\end{bmatrix}+\begin{bmatrix}0\\ 
      0\\
     0\\
   \tau_{u,i}\\
  0\\
   \tau_{r,i}\end{bmatrix}
\end{align}}\normalsize
where $z_i = [x_i,\;y_i,\;\phi_i]^T$ is the configuration vector of the $i$-th agent, $[u_i,\;v_i,\;r_i]^T$ are the velocities (linear and angular) w.r.t the body-fixed frame, $\tau_i = [\tau_{u,i}, \; \tau_{r,i}]^T$ is the control input vector where $\tau_{r,i}$ are the control input along the surge ($x$-axis) and yaw degree of freedom, respectively, $X_u, Y_v, N_r$ are the linear drag terms, and $X_{u|u|}, Y_{v|v|}, N_{r|r|}$ are the non-linear drag terms (see \cite{panagou2014dynamic} for more details). The additive disturbance $d = \begin{bmatrix}V_w(X_i, t)\cos(\theta_w(X_i, t))\\ V_w(X_i,t)\sin(\theta_w(X_i, t))\end{bmatrix}$ with $|V_w(X_i, t)|\leq \gamma$ models the effect of an unknown, time-varying water current acting on the system dynamics of each agent. We also consider measurement uncertainties in the state estimates as stated in Assumption \ref{assum hat x eps}. The system dynamics is under-actuated since there is no control input in the sway degree of freedom ($y$-axis). The multi-task problem considered for the case study is as follows (see Figure \ref{fig: fov}):
\begin{Problem*}
Compute $\tau_i\in \mathcal U_i = [-\tau_{u,m}, \tau_{u,m}]\times[-\tau_{r,m}, \tau_{r,m}]$, $\tau_{u,m},\tau_{r,m}>0$, such that each agent
\begin{itemize}
    \item[(i)] Reaches an assigned goal region around a point $g_i\in \mathbb R^2$ within a user-defined time $T$;
    \item[(ii)] Keeps their respective point-of-interest $p_i\in \mathbb R^2$ in their field of view (given as a sector of radius $R>0$ and angle $\alpha>0$);
    \item[(iii)] Maintains a safe distance $d_s$ w.r.t. other agents;
\end{itemize}
where $\angle(\cdot)$ is the angle of the vector $(\cdot)$ with respect to the $x$-axis of the global frame. 
\end{Problem*}

\begin{figure}[t]
    \centering
        \includegraphics[width=0.85\columnwidth,clip]{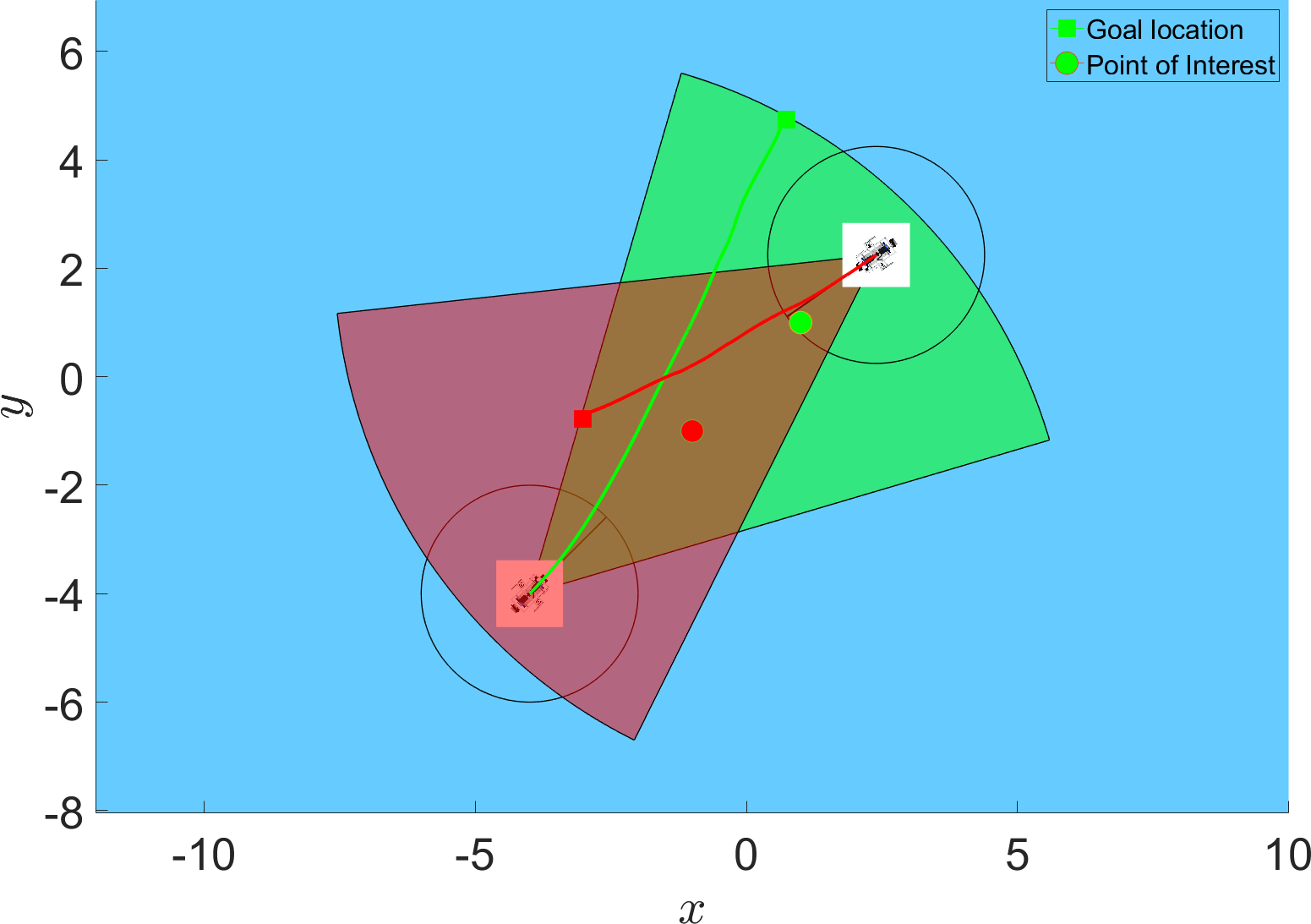}
    \caption{Problem setting for the two-agent case.}\label{fig: fov}
\end{figure}

\begin{table}[b]
\caption{Dynamic parameters as taken from \cite{panagou2014dynamic}.}
\centering
\begin{tabular}{|l|l|l|l|l|l|}
\hline
$m_{11}$ & 5.5404 & $X_u$ & -2.3015 & $X_{u|u|}$ & -8.2845 \\ \hline
$m_{22}$ & 9.6572 & $Y_v$ & -8.0149 & $Y_{v|v|}$ & -23.689 \\ \hline
$m_{22}$ & 1536   & $N_r$ & -0.0048 & $N_{r|r|}$ & -0.0089 \\ \hline
\end{tabular}\label{tab:1}
\end{table}

Note that (ii) requires safety with respect to a static safe set, while requires (iii) safety with respect to a time-varying safe set. The parameters used in the case study are given in Table \ref{tab:1}.  First, we construct CLF and CBFs to guarantee convergence to the desired location, and invariance of the required safe sets, respectively. Consider the function $ h_{ij} = d_s^2-\left\|\begin{bmatrix}x_i(t) & y_i(t)\end{bmatrix}^T-\begin{bmatrix}x_j(t) & y_j(t)\end{bmatrix}^T\right\|^2$, defined for $i\neq j$, so that $h_{ij}\leq 0$ implies that the agents maintain the safe distance $d_s$. Since the function $h_{ij}$ is relative degree two function with respect to the dynamics \eqref{eq: NL dyn example}, we use the second order safety condition discussed in \cite{wang2017safety}. Similarly, for keeping the point-of-interest in the field of view, we use two separate CBFs, defined as $h_\phi  = \left|\angle\left(p_i-\begin{bmatrix}x_i & y_i\end{bmatrix}^T\right)-\phi_i\right|^2-\alpha^2, h_R  = \left\|\begin{bmatrix}x_i & y_i\end{bmatrix}^T-p_i\right\|^2 - R^2$, so that $h_\phi(z_i)\leq 0, h_R(z_i)\leq 0$ implies that $z_i\in \mathcal F$. For $h_\phi, h_R$, we use the relative degree 2 condition \eqref{eq: h rel deg 2}. Finally, we define the CLF as $V = \frac{1}{2}(X_i-X_{di})^T(X_i-X_{di})$, where $X_i\in \mathbb R^6$ is the state vector of the $i$-th agent, and $X_{di}\in \mathbb R^6$ its desired state, defined as $ X_{di} = \begin{bmatrix}g_i\\ \theta_g\\ c_1\left\|g_i-\begin{bmatrix}x_i & y_i\end{bmatrix}^T\right\|\cos(\theta_g-\phi_i)\\ c_1\left\|g_i-\begin{bmatrix}x_i & y_i\end{bmatrix}^T\right\|\sin(\theta_g-\phi_i)\\ c_2(\theta_g-\phi_i)\end{bmatrix}$,
where $\theta_g = \angle\left(g_i-\begin{bmatrix}x_i & y_i\end{bmatrix}^T\right)$ and $c_1, c_2>0$ are some constants. We consider 4 agents for the numerical simulations.  First, we fix $\gamma = 0.5$ and $\epsilon = 0.5$. Figure \ref{fig:4 agent traj} shows the path traced by 4 agents. The solid circular region represents the goal set defined as $\{X\; |\; V(X)\leq 0.1\}$, and the square boxes denote the point of interests $p_i$ for each agent.\footnote{A video of the simulation is available at: \href{https://tinyurl.com/y32oa4p4}{https://tinyurl.com/y32oa4p4}.} Figure \ref{fig:4 agent V comp} plots $V_M = \max_i\{V_i\}$ showing the convergence of the agents to their respective goal sets, while satisfying all the safety constraints, as can be seen from Figure \ref{fig:h max}, which plots $h_M = \max\{h_{ij},h_R, h_\phi\}$ showing that all the CBFs are non-positive at all times for all the three cases. 

\begin{figure}[t]
    \centering
        \includegraphics[width=1\columnwidth,clip]{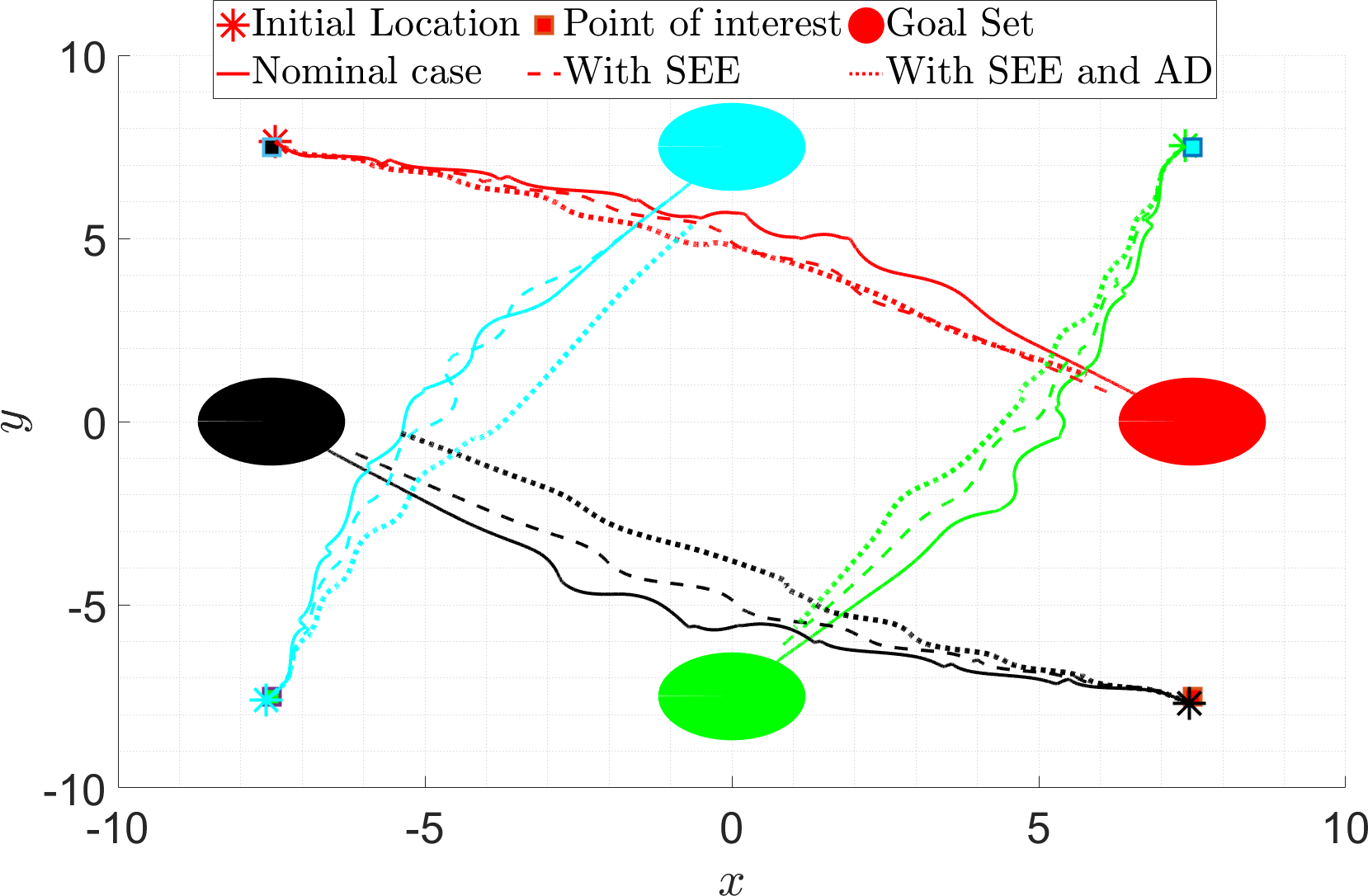}
    \caption{Closed-loop paths traced by agents in a 4 agents scenario for the nominal case, i.e., without any disturbance (solid lines), with only state estimation error (SEE) (dashed lines) and with both SEE and additive disturbance (AD) (dotted lines).}\label{fig:4 agent traj}
\end{figure}

Figures \ref{fig:tau u} and \ref{fig:tau r} show the control inputs $\tau_u$ and $\tau_r$, respectively, for the 4 agents, and it can be seen that the control input constraints are satisfied at all times. 

\begin{figure}[!ht]
    \centering
        \includegraphics[width=1\columnwidth,clip]{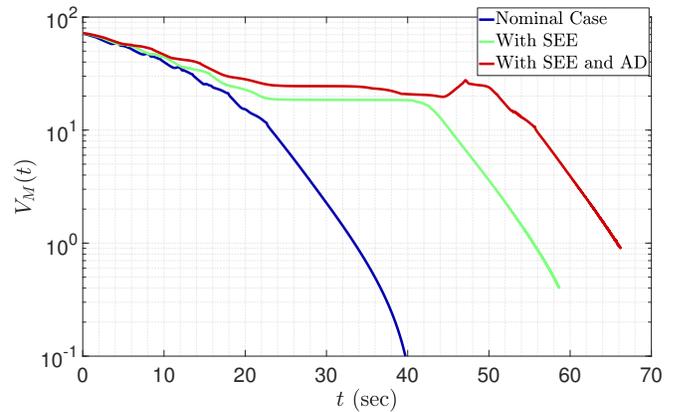}
    \caption{Point-wise maximum of Lyapunov functions $V_M(t) = \max_{i}\{V_i(t)\}$ with time for the three cases.}\label{fig:4 agent V comp}
\end{figure}

\begin{figure}[!ht]
    \centering
        \includegraphics[width=1\columnwidth,clip]{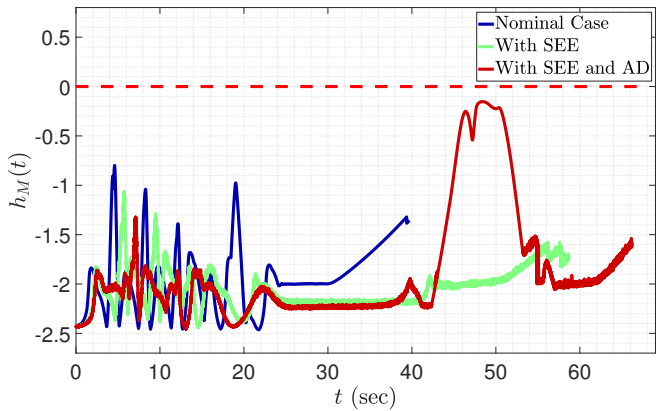}
    \caption{Point-wise maximum of CBFs $h_{ij},h_R, h_\phi$, showing satisfaction of all the safety constraints for the three cases.}\label{fig:h max}
\end{figure}

\begin{figure}[!ht]
    \centering
        \includegraphics[width=1\columnwidth,clip]{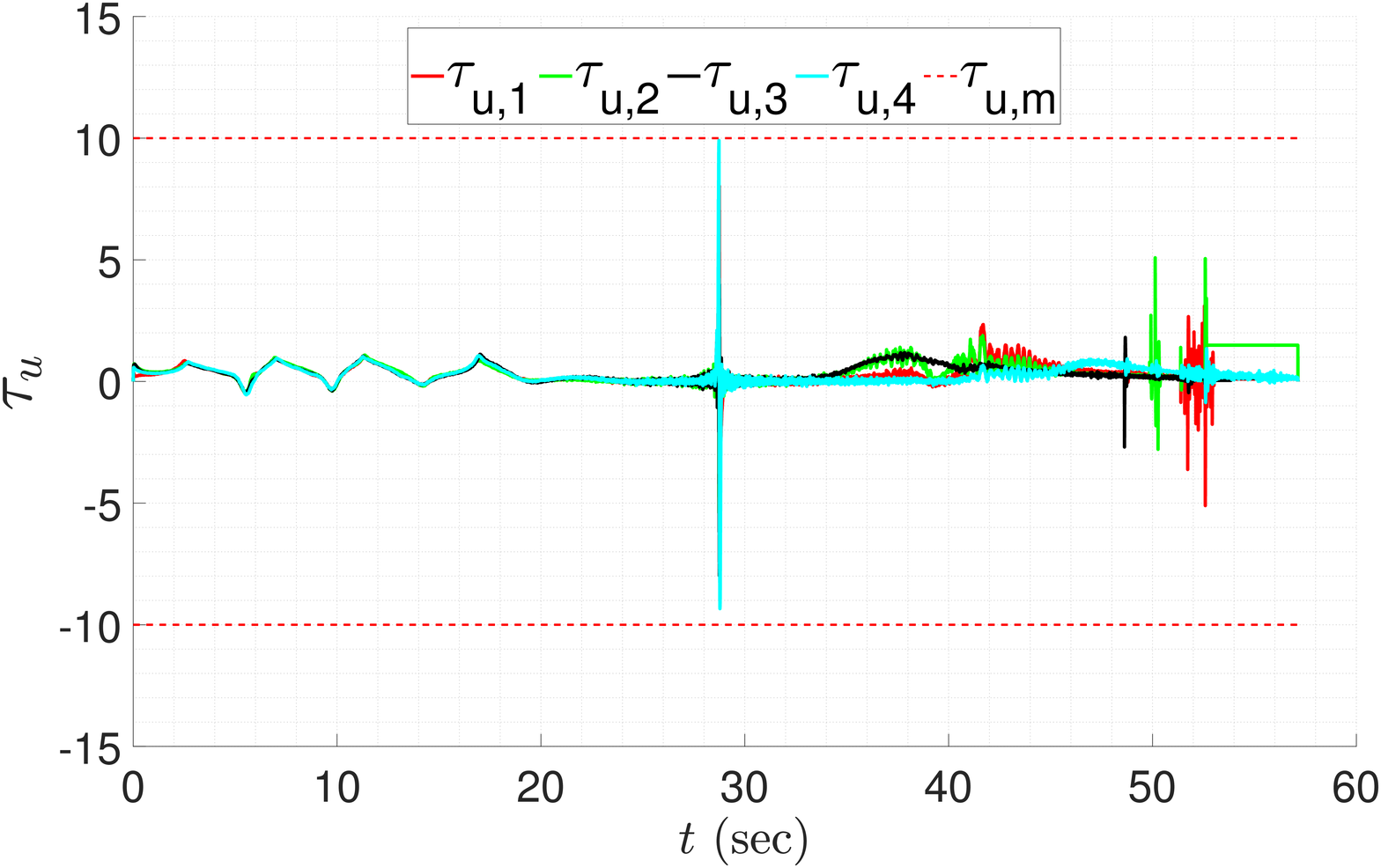}
    \caption{Control input $\tau_u$ for each agent. }\label{fig:tau u}
\end{figure}

\begin{figure}[!ht]
    \centering
        \includegraphics[width=1\columnwidth,clip]{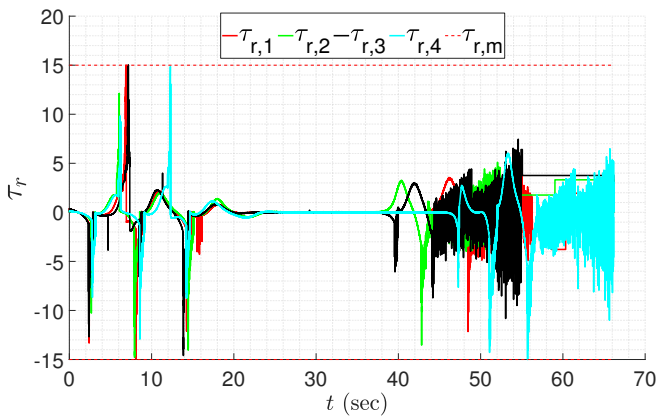}
    \caption{Control input $\tau_r$ for each agent. }\label{fig:tau r}
\end{figure}



\section{Conclusion}
We considered a multi-task control synthesis problem for a class of nonlinear, control-affine systems under input constraints, where the objectives include remaining in a static safe set and a time-varying safe set and reaching a goal set within a fixed time. We also considered additive disturbances in the system dynamics and bounded state-estimation errors. We utilized robust CBFs to guarantee safety, and robust FxT-CLF to guarantee fixed-time reachability to given goal sets. Finally, we formulated a QP, incorporating safety and convergence constraints using slack variables so that its feasibility is guaranteed. We showed that under certain conditions, control input defined as the solution of the proposed QP solves the multi-task problem, even in the presence of the considered disturbances and input constraints.

One of the drawbacks of the presented method is conservatism due to the absence of knowledge of the structure of the disturbance. In the future, we would like to study online learning-based methods to learn estimates of the disturbances, so that the formulation can be made less conservative.


\bibliographystyle{IEEEtran}
\bibliography{myreferences}

\end{document}